\theoremstyle{plain} \newtheorem{theorem}{Theorem}[section]
\theoremstyle{plain} \newtheorem{proposition}[theorem]{Proposition}
\theoremstyle{plain} \newtheorem{lemma}[theorem]{Lemma}
\theoremstyle{plain} \newtheorem{corollary}[theorem]{Corollary}
\theoremstyle{plain} 
\theoremstyle{definition} 
\theoremstyle{definition} \newtheorem{definition}[theorem]{Definition}
\newcommand{\mN}{\mathbb{N}}
\newcommand{\mC}{\mathcal{C}}
\newcommand{\mR}{\mathcal{R}}
\newcommand{\mK}{\mathcal{K}}
\newcommand{\mP}{\mathcal{P}}
\newcommand{\mH}{\mathcal{H}}
\newcommand{\mM}{\mathcal{M}}
\newcommand{\id}{\operatorname{id}}
\begin{document}

\title{Construction of the discrete Morse complex in the non-compact case}

\author{Micha\l \ Kukie\l a}
\date{}
\maketitle

\begin{abstract}
We prove an infinite analogue of the main theorem of discrete Morse theory formulated in terms of discrete Morse matchings. Our theorem holds under the assumption that the given Morse matching induces finitely many equivalence classes of infinite directed simple paths. A homological version of the theorem is also given. 
\end{abstract}

\begin{section}{Introduction}
Discrete Morse theory, developed by Robin Forman (see \cite{forman}, or \cite{forman-easy} for a non-technical introduction), is a tool for investigating homotopy type and homology groups of finite CW-complexes.  Forman calls a function $f$ from the set of cells of a regular, finite CW-complex $X$ to the set of real numbers $\mathbb{R}$ a discrete Morse function, if for each cell $\sigma$ the sets \[u_f(\sigma)=\{\tau>\sigma:\dim(\tau)=\dim(\sigma)+1, f(\tau)\leq f(\sigma)\}\] and \[d_f(\sigma)=\{\mu<\sigma:\dim(\mu)=\dim(\sigma)-1, f(\mu)\geq f(\sigma)\}\]have cardinality at most one. A cell $\sigma$ is called critical with respect to $f$ if both $u_f(\sigma)$ and $d_f(\sigma)$ are empty. It turns out, and this result is called the main theorem of discrete Morse theory, that the CW-complex $X$ is homotopy equivalent to a CW-complex that has exactly one $n$-dimensional cell for each critical cell in $X$ of dimension $n$. Moreover, for $a\in\mathbb{R}$ one can define the level subcomplex \[X(a)=\bigcup_{f(\sigma)\leq a}\bigcup_{\tau\leq \sigma}\tau.\] If $f^{-1}([a,b])$ does not contain any critical cells for some $a<b\in\mathbb{R}$, then the level subcomplex $X(b)$ collapses (in the sense of simple homotopy theory \cite{cohen}) to $X(a)$.

The theory has found many applications, both in topological combinatorics and in applied mathematics. It has also been extended in different directions and presented from different viewpoints. For some extensions of the theory developed by Forman himself, consider \cite{forman-novikov,forman-witten}. An approach to equivariant version of the theory was taken by Ragnar Freij \cite{freij}. Gabriel E.~Minian \cite{minian} extends the theory to a class of posets strictly larger than the class of face posets of regular CW-complexes. Some algebraic versions of the theory, such as presented in \cite{freij,jollenbeck}, are also available.

However, most of these extensions deal with finite CW-complexes.  The problem of extending the theory to infinite complexes was stated by Forman in \cite{forman-easy} and tackled with by numerous authors in \cite{ayala} and earlier work cited there. The present paper deals with the same problem, with a slightly different approach. (It is perhaps worth noting that the discrete Morse theory was also applied to infinite complexes in \cite{mathai}, though in a vastly different spirit than in the present paper; we shall not discuss these investigations.) 

To compare our results with those of \cite{ayala} we need to utilise an observation attributed to Manoj K. Chari \cite{chari}, that a discrete Morse function $f$ on a finite CW-complex $X$ induces a matching $M$ on the Hasse diagram $\mH(X)$ of the face poset of $X$. The matching is constructed using the observation that for a cell $\sigma$ of $X$ only one of the sets $u_f(\sigma), d_f(\sigma)$ may be nonempty; its only element is matched with $\sigma$. This matching is acyclic, which means that the digraph $\mH_M(X)$ obtained from $\mH(X)$ by inverting the arrows which belong to the matching does not contain a directed cycle. Those acyclic matchings are in fact a different description of discrete vector fields, studied by Forman \cite{forman}, and they carry all the information on critical cells and the sets $d_f(\sigma), u_f(\sigma)$.

In the case the CW-complex $X$ is finite, given an acyclic matching $M$ on $X$ one may construct a discrete Morse function on $X$ whose associated matching is $M$. In \cite{ayala} the authors extend these results to the case of locally finite complexes. They consider discrete Morse functions that are proper, which means that $f^{-1}([a,b])$ is finite for all $a<b\in\mathbb{R}$. 

For proper discrete Morse functions the proofs contained in \cite{forman} may be applied to show that if $f^{-1}([a,b])$ does not contain any critical cells for $a<b\in\mathbb{R}$, then the level subcomplexes $X(a), X(b)$ are homotopy equivalent. This allows one to study the topology of $X$ at different levels of $f$, but does not give a description or an explicit construction of a complex built solely of the critical cells. In \cite{ayala-morse_ineq1, ayala-morse_ineq2} the discrete Morse inequalities were given for proper discrete Morse functions on 1-dimensional and 2-dimensional locally finite complexes, which may be considered a step in this direction.

Call an infinite, directed simple path in $\mH_M(X)$ a decreasing ray, or simply a ray. Two rays are called equivalent if they coincide from some point. The paper \cite{ayala} characterizes the matchings coming from proper discrete Morse functions as those that are acyclic and do not contain some specific configurations of infinite paths in the graph $\mH_M(X)$, though this is only done in case of matchings with a finite number of critical cells and equivalence classes of decreasing rays.

In the present article we take a different approach and concentrate on the discrete Morse matchings, forgetting about discrete Morse functions. We prove that given an acyclic matching $M$ on an infinite (not necesarrily locally finite), regular CW-complex $X$ such that $\mH_M(X)$ contains finitely many equivalence classes of rays one may construct a CW-complex $X_M$ that has exactly one cell for each critical cell of $X$ and one cell for each equivalence class of decreasing rays in $X_M$. This yields a version of the discrete Morse inequalities as a corollary. Since we do not assume local finiteness or finite dimensionality of the CW-complex under investigation, our results extend those of \cite{ayala-morse_ineq1, ayala-morse_ineq2}

In fact we prove a bit more than it is stated above. Our results hold for the class of admissible posets. Moreover, we prove a homological version of the theorem for homologically admissible posets. Both of these classes were introduced in \cite{minian} and are strictly larger than the class of face posets of regular CW-complexes.

The paper is organized as follows. In Section \ref{preliminaries} we give some notation used throughout the paper and define basic combinatorial concepts used in discrete Morse theory. Section~\ref{rayless} defines rayless discrete Morse matchings and proves some of their basic properties. Next, in Section \ref{reversing} we show that given a discrete Morse matching $M$ that induces finitely many equivalence classes of decreasing rays one can create a new matching that is rayless and apart from the critical cells coming from $M$ has one new critical cell for each equivalence class of rays in $M$. In Section \ref{homological} we derive from the work of Freij \cite{freij}, which was one of the main inspirations for writing this article, a homological version of the main theorem of discrete Morse theory for Morse matchings with finitely many equivalence classes of rays. Section \ref{homotopical} gives a homotopical version of the theorem. Finally, in Section \ref{remarks} we give some concluding remarks and ideas for further research.
\end{section}

\begin{section}{Preliminaries}\label{preliminaries}
Let us fix some notation. Given a \textit{poset} (partially ordered set) $P$ and a finite chain (i.e. a totally ordered subset) $C=\{x_0,x_1,\ldots,x_n\}\subseteq P$ we say $n$ is the \textit{length} of the chain $C$. For $x\in P$ let $x\!\downarrow_P=\{y\in P:y\leq x\}$, $x\!\uparrow_P=\{y\in P:y\geq x\}$ and let $\hat{x}\!\downarrow_P=x\!\downarrow_P\smallsetminus\{x\}$, $\hat{x}\!\uparrow_P=x\!\uparrow_P\smallsetminus\{x\}$. Moreover, let $\hat{x}\!\updownarrow_P=\hat{x}\!\uparrow_P\cup\hat{x}\!\downarrow_P$. If it does not lead to confusion, we shall omit the $P$ in notation, i.e. write $x\!\downarrow$, $\hat{x}\!\uparrow$, etc. $P$ is called \textit{graded} if for every $x\in P$ all maximal chains in $x\!\downarrow$ are finite and have the same length, which is then called the \textit{degree of $x$} and denoted by $\deg(x)$. $P$ is said to have \textit{finite principal ideals} if for every $x\in P$ the set $x\!\downarrow$ is finite.

Given a \textit{digraph} (directed graph without loops and multiple edges) $D$, we call a set $M$ of arrows in $D$ a \textit{matching} on $D$ if no two arrows in $M$ are adjacent. $M$ may be also thought of as a set of disjoint pairs of adjacent vertices of $D$; vertices belonging to such a pair in $M$ are \textit{matched by $M$}.  A sequence $\{x_0, x_1, \ldots, x_n\}$ of elements of $D$ is called a \textit{simple path in $D$ of length $n$} if there is an arrow pointing from $x_i$ to $x_{i+1}$ and $x_i\not=x_j$ for all $i,j\in\mN$. $D$ is said to contain a \textit{cycle} if there is a simple path $\{x_0, x_1,\ldots, x_n\}$ in $D$ with an arrow pointing from $x_n$ to $x_0$; otherwise, $D$ is called \textit{acyclic}.

By $\mH(P)$ we denote the \textit{Hasse diagram} of the poset $P$, which is the digraph whose vertices are the elements of $P$ and which has an arrow from $x$ to $y$ for each pair $x,y\in P$ such that $x\succ y$ ($x$ covers $y$). Given a matching $M$ on $\mH(P)$ by $\mH_M(P)$ we denote the directed graph obtained from $\mH(P)$ by reversing the arrows which are in $M$. A matching on $\mH(P)$ is called \textit{acyclic} if $\mH_M(P)$ is an acyclic digraph.

We use throughout the paper the same notation for a geometric CW-complex and for the discrete set of its cells. In particular given an abstract simplicial complex $K$ we write $K$, and not $|K|$ or $\overline{K}$, for its geometric realization.

The \textit{order complex} of $P$, denoted by $\mK(P)$, is the abstract simplicial complex whose simplices are the finite chains of $P$. Given a regular CW-complex $X$ we define $\mP(X)$ to be its \textit{face poset}, which is the poset of cells of $X$ ordered by inclusion. Note that $\mK(\mP(X))$ is just the barycentric subdivision of $X$.
\end{section}

\begin{section}{Rayless Morse matchings}\label{rayless}
\begin{definition}
A sequence $\{x_0, x_1, x_2\ldots\}$ of elements of a digraph $D$ is called a \textit{(directed) ray} if for all $i\in\mN$ there is an arrow in $D$ from $x_i$ to $x_{i+1}$ and $x_i\not=x_j$ for all $i,j\in\mN$. $D$ is called a \textit{rayless digraph} if it does not contain a ray.
\end{definition}

\begin{definition}
An acyclic matching $M$ on the Hasse diagram $\mH(P)$ of a poset $P$ is called a \textit{Morse matching on $P$}. An element $x\in P$ is called \textit{critical} if it is not matched with any other element in $P$.  By a Morse matching on a regular CW-complex $X$ we understand a Morse matching on $\mP(X)$. A Morse matching $M$ is \textit{rayless} if the digraph $\mH_M(P)$ is rayless. 
\end{definition}

One should keep in mind that a Morse matching on a poset in the above sense in general does not correspond to any discrete Morse function (for example, when the poset $P$ is far from being a face poset of a CW-complex).

Note that in \cite{ayala} directed rays are called decreasing rays, which reflects the fact that any discrete Morse function associated with a discrete Morse matching (if there is one) decreases along every directed ray induced by the matching. This stays in contrast to increasing rays, which are sequences $x_0, x_1, x_2\ldots$ of distinct elements of a digraph with arrows pointing from $x_{i+1}$ to $x_i$ for all $i\in\mN$. Our definition of a rayless digraph allows it to contain an increasing ray.

\begin{definition}
Given a graded poset $P$ together with a Morse matching $M$ and a vertex $x\in P$ of degree $n$, we define the set \[M_+(x)=\begin{cases}\{y\prec z: y\not=x\} & \text{if } x \text{ is matched with some vertex } z \text{ of degree } n+1\\ \emptyset & \text{otherwise}\end{cases}.\] 

Let $D_0(x)=\{x\}$, $D_n(x)=D_{n-1}(x)\cup\bigcup_{y\in D_{n-1}(x)}M_+(y)$ for $n>0$ and $D_M(x)=\bigcup_{n\in\mN}D_n(x)$. Equip $D_M(x)$ with arrows facing from $y\in D_M(x)$ to all $z\in M_+(y)$.
\end{definition}

Recall the following result, known as the K{\"o}nig's Infinity Lemma.
\begin{lemma}\label{konig}Every infinite, directed, rooted tree $T$ with arrows directed away from the root such that every vertex of $T$ has finite degree contains a directed ray. 
\end{lemma}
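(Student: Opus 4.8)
The plan is to build a directed ray one vertex at a time, starting from the root, and use the finite-degree hypothesis at every step to guarantee that the construction never terminates. For a vertex $x$ of $T$, I would write $T_x$ for the set of descendants of $x$, that is, the vertices reachable from $x$ by a directed path (including $x$ itself). Since $T$ is a tree with arrows directed away from the root, $T_x$ is exactly the vertex set of the subtree hanging below $x$, and if $y_1,\ldots,y_k$ are the children (out-neighbours) of $x$, then $T_x=\{x\}\cup T_{y_1}\cup\cdots\cup T_{y_k}$, the union on the right being disjoint.

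First I would isolate the key combinatorial observation. Because every vertex has finite degree, each $x$ has only finitely many children $y_1,\ldots,y_k$. Hence if $T_x$ is infinite, then $\{x\}\cup T_{y_1}\cup\cdots\cup T_{y_k}$ is infinite; as a finite union of finite sets is finite, at least one $T_{y_j}$ must be infinite. In other words, \emph{any vertex whose subtree is infinite has a child whose subtree is infinite}.

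Next I would run the recursion. Set $x_0$ to be the root, so that $T_{x_0}=T$ is infinite by assumption. Assuming $x_0,\ldots,x_n$ have been chosen with an arrow from $x_i$ to $x_{i+1}$ present in $T$ and $T_{x_n}$ infinite, the observation above supplies a child $x_{n+1}$ of $x_n$ with $T_{x_{n+1}}$ infinite, continuing the construction and yielding a sequence $x_0,x_1,x_2,\ldots$ with an arrow from $x_i$ to $x_{i+1}$ for every $i$. To confirm this is genuinely a ray I would verify distinctness: since each $x_{i+1}$ is a child of $x_i$, the vertex $x_i$ sits at distance exactly $i$ from the root along the unique directed path, and vertices at distinct distances are distinct, so $x_i\neq x_j$ for $i\neq j$.

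The one point requiring care is the inductive step, namely that an infinite subtree must contain an infinite subtree rooted at one of its children. This is precisely where finiteness of the degree is indispensable: without it the union $T_x=\{x\}\cup\bigcup_j T_{y_j}$ could be infinite while every $T_{y_j}$ is finite, and the recursion could stall. That pigeonhole step is the heart of the argument; everything else is bookkeeping.
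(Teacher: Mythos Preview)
Your argument is correct and is precisely the standard proof of K\"onig's Infinity Lemma. The paper does not actually supply a proof of this lemma; it merely recalls it as a known result and then invokes it in the proof of Lemma~\ref{tree_lemma}. So there is nothing to compare against, and your write-up fills in exactly the expected details.
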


\begin{lemma}\label{tree_lemma}
If $P$ is a graded poset with finite principal ideals equipped with a rayless Morse matching $M$, then for every $x\in P$ the digraph $D_M(x)$ is acyclic and finite.
\end{lemma}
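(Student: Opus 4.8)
The plan is to establish the two assertions in turn, proving acyclicity first because the finiteness argument will rely on it. A preliminary observation I would record is that $M_+$ preserves degree: if $y$ has degree $n$ and is matched with $z$, then $z$ has degree $n+1$ and every element of $M_+(y)$ is covered by $z$, hence has degree $n$ by gradedness; an easy induction then gives $\deg(y)=\deg(x)$ for all $y\in D_M(x)$. The decisive translation device is that each arrow $y\to y'$ of $D_M(x)$, where $y'\in M_+(y)$ and $y$ is matched with $z$, lifts to the length-two directed path $y\to z\to y'$ in $\mH_M(P)$, the first arrow being the reversed matching edge and the second an ordinary (non-reversed) Hasse arrow, since $z$ is matched only with $y\neq y'$.

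For acyclicity I would assume a cycle $y_0\to y_1\to\cdots\to y_k\to y_0$ in $D_M(x)$ and lift every arrow as above, obtaining the closed sequence $y_0\to z_0\to y_1\to\cdots\to y_k\to z_k\to y_0$ in $\mH_M(P)$, where $z_i$ denotes the match of $y_i$. The vertices $y_i$ are distinct because they form a cycle, the $z_i$ are distinct because distinct matched vertices have distinct partners, and no $y_i$ coincides with any $z_j$ as they lie in different degrees. This produces a genuine cycle in $\mH_M(P)$, contradicting the acyclicity of $M$. I expect this step to be routine.

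For finiteness I would argue by contradiction using K{\"o}nig's Infinity Lemma (Lemma~\ref{konig}). Supposing $D_M(x)$ infinite, I would build the rooted tree $T$ whose vertices are the finite directed walks in $D_M(x)$ issuing from $x$, with root $(x)$ and the child relation given by one-step extension. This $T$ is infinite, since every element of $D_M(x)$ is reached from $x$ by such a walk and infinitely many elements force infinitely many walks; and $T$ has finite branching, since the children of a walk ending at $y$ are indexed by $M_+(y)$, which is finite because it is contained in the set of elements covered by the match $z$ of $y$, all lying in the finite principal ideal $z\!\downarrow$. K{\"o}nig's lemma then supplies an infinite branch, that is, an infinite directed walk $x=y_0,y_1,y_2,\ldots$ in $D_M(x)$.

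The point demanding care is turning this walk into an actual ray of $\mH_M(P)$. Here the acyclicity proved above does the work: in an acyclic digraph every directed walk is a simple path, so the $y_i$ are pairwise distinct. Lifting arrow by arrow gives the infinite sequence $y_0\to z_0\to y_1\to z_1\to\cdots$ in $\mH_M(P)$, and the same distinctness bookkeeping as in the acyclicity argument (distinct $y_i$, distinct partners $z_i$, and $y_i\neq z_j$ by degree) shows it is a simple directed path, hence a ray. This contradicts the raylessness of $M$, forcing $D_M(x)$ to be finite.
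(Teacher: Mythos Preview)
Your proof is correct and follows essentially the same strategy as the paper: acyclicity comes from lifting a putative cycle in $D_M(x)$ to one in $\mH_M(P)$, and finiteness comes from K{\"o}nig's Infinity Lemma together with the finiteness of principal ideals and the raylessness of $M$. The only technical difference is that the paper applies K{\"o}nig to a directed spanning tree of $D_M(x)$ rooted at $x$, whereas you apply it to the tree of directed walks from $x$; your version is a bit more explicit (you spell out the degree-preservation and the two-step lifting $y\to z\to y'$, which the paper leaves to the reader), but the underlying argument is the same.
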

\begin{proof}
Acyclicity of $D_M(x)$ clearly follows from the definition of $D_M(x)$ and $M$ being acyclic. Consider a spanning tree $T$ in $D_M(x)$. By definition (and acyclicity) of $D_M(x)$ it has edges directed away from the root $x$. Degree of each vertex in $T$ is finite  since $P$ has finite principal ideals. Because $M$ is rayless, by Lemma \ref{konig} the tree $T$ is finite and, as $T$ is a spanning tree in $D_M(x)$, the latter digraph is also finite.
\end{proof}

\begin{definition}
For $P$ a graded poset with finite principal ideals equipped with a rayless Morse matching $M$ and $x\in P$ let $L_M(x)$ denote the length of the longest directed simple path in $D_M(x)$.
\end{definition}

\end{section}

\begin{section}{Reversing rays}\label{reversing}
Let $r=\{r_1,r_2,r_3,\ldots\}, s=\{s_1,s_2,s_3,\ldots\}$ be two rays in a digraph $D$. We define (after \cite{ayala}) the rays $r, s$ to be \textit{equivalent} if there exist $M,N\in\mN$ such that $r_{M+i}=s_{N+i}$ for all $i\in\mN$. The equivalence class of $r$ under this relation is denoted by $[r]$.

We say that $r$ has a \textit{bypass starting at $r_n$} if there is a simple path in $D$ not contained in $r$ that goes from $r_n$ to $r_{n+k}$ for some $k>0$. The ray $r$ is called a \textit{multiray} if for every $N\in\mN$ the ray $r$ has a bypass starting at $r_{N+i}$ for some $i\in\mN$.

Let $M$ be a Morse matching on a graded poset $P$ and let $r=\{r_1,r_2,r_3,\ldots\}$ be a ray in $\mH_M(P)$. Let
\begin{align*}i_0& =\min\{i:\exists_{j\in\mN} \deg(r_j)=i\}\\
j_0& =\min\{j:\deg(r_j)=i_0\}.
\end{align*}
One easily notes that since $M$ is a matching the ray $r'=r_{j_0}, r_{j_0+1}, r_{j_0+2},\ldots$ consists only of elements of degree $i_0$ and $i_0+1$. We say $i_0$ is the \textit{degree} of the ray $r$, or call $r$ an \textit{$i_0$-ray}. Clearly, equivalent rays have the same degree.

%\begin{example}
%Let $K$ be the $2$-dimensional simplicial complex whose vertices are the integers $\mathbb{Z}$ and whose facets are all subsets of $2^\mathbb{Z}$ of the form: $\{2k,2k+1,2k+2\}, \{2k,2k+2, 2k+3\}, \{2k+1,2k+2,2k+3\}$ for $k\in\mathbb{Z}$. It is easy to see the complex $K$ is contractible.

%Consider the matching $M$ on $\mH(\mP(K))$ that matches:
%\begin{align*}&\{2k,2k+1\} \text{ with } \{2k,2k+1,2k+2\}\\ &\{2k,2k+2\} \text{ with } \{2k,2k+2,2k+3\}\\ &\{2k+1,2k+2\} \text{ with } \{2k+1,2k+2,2k+3\}.\end{align*}

%Then $M$ is a discrete Morse matching on $K$ that contains a $1$-dimensional multiray. Figure~\ref{fig1} ilustrates the part of the graph $\mH_M(\mP(K))$ corresponding to edges and triangles in $K$ (no vertices).

\begin{lemma}\label{prop_multiray}
If an acyclic digraph $D$ contains a multiray, then it contains $2^{\aleph_0}$ equivalence classes of rays.
\end{lemma}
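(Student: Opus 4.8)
The plan is to use the bypasses of the multiray as independent binary switches: at each bypass we may either follow the ray or take the detour, and by encoding an arbitrary subset of $\mN$ in these choices we produce continuum-many rays, continuum-many of which turn out to be pairwise inequivalent.

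First I would extract from the given multiray $r$ an infinite family of bypasses with pairwise almost disjoint supports. Applying the defining property of a multiray greedily, I choose bypasses $B_1, B_2, \ldots$, where $B_m$ runs from $r_{n_m}$ to $r_{n_m+k_m}$, so that $n_{m+1}\geq n_m+k_m$; thus the intervals $[n_m, n_m+k_m]$ are disjoint apart from possibly shared endpoints and march off to infinity. The crucial structural input is acyclicity. Writing $a\rightsquigarrow b$ when $D$ admits a directed path from $a$ to $b$, acyclicity makes $\rightsquigarrow$ a partial order, and along the ray $r_i\rightsquigarrow r_j$ with $r_i\neq r_j$ whenever $i<j$. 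Since every internal vertex $v$ of $B_m$ satisfies $r_{n_m}\rightsquigarrow v\rightsquigarrow r_{n_m+k_m}$, two facts come for free: (i) any ray vertex occurring inside $B_m$ has index strictly between $n_m$ and $n_m+k_m$; and (ii) the internal vertices of distinct bypasses are pairwise distinct and disjoint from every ray vertex lying outside the corresponding interval.

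Next, for each $S\subseteq\mN$ I would define a ray $\rho_S$ that follows $r$ but, on the interval of $B_m$, takes the detour $B_m$ when $m\in S$ and follows $r$ otherwise. Facts (i)--(ii) ensure that the gadgets inserted in different intervals cannot collide, so $\rho_S$ is a genuine directed simple path, hence a ray. To tell the resulting rays apart I would attach to each $B_m$ a witness vertex $w_m$: since $B_m$ is not the segment of $r$ from $r_{n_m}$ to $r_{n_m+k_m}$, the vertex set of $B_m$ differs from that of this segment, and (using acyclicity again) any vertex in their symmetric difference is either an intermediate ray vertex skipped by $B_m$ or an off-ray internal vertex of $B_m$. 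In either case $w_m$ lies in $\rho_S$ precisely when the $m$-th choice takes a prescribed value, so $w_m\in\rho_S\bigtriangleup\rho_{S'}$ exactly when $m\in S\bigtriangleup S'$; moreover distinct $m$ yield distinct $w_m$.

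Finally, equivalent rays share a common tail and therefore have finite symmetric difference of vertex sets; hence $\rho_S$ and $\rho_{S'}$ are inequivalent whenever $S\bigtriangleup S'$ is infinite. Taking an almost disjoint family of infinite subsets of $\mN$ of cardinality $2^{\aleph_0}$, any two members of which have infinite symmetric difference, produces $2^{\aleph_0}$ pairwise inequivalent rays and proves the lemma. I expect the main obstacle to be the verification in the previous step that $\rho_S$ really is a simple path---that bypasses taken in different intervals, and bypass vertices as against skipped ray vertices, never accidentally coincide. This is exactly the point at which acyclicity does the essential work, by confining each bypass to the $\rightsquigarrow$-interval between its two endpoints.
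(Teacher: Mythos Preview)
Your proposal is correct and follows essentially the same approach as the paper's proof, which is only a sketch: encode subsets of $\mN$ by independently choosing, for each of infinitely many bypasses, whether to follow $r$ or to detour, and use acyclicity to guarantee the resulting paths are simple and distinguishable. Your write-up supplies the details the paper omits (in particular the verification that bypasses are confined to their $\rightsquigarrow$-intervals and hence cannot collide), and your final counting step via an almost disjoint family is a harmless variant of the paper's observation that $\rho_S\sim\rho_{S'}$ iff $S\bigtriangleup S'$ is finite.
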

\begin{proof}
In order to avoid notational horrors we shall only sketch the idea of the proof. Given a multiray $r$ one may construct an infinite path $s$ in $D$ which is the concatenation of an infinite sequence of bypasses of $r$ and fragments of $r$ joining two consecutive bypasses. By acyclicity of $D$ the infinite path $s$ is simple, i.e. it is a ray.

There are $\aleph_0$ bypasses in the above sequence. Index them with the natural numbers. Now, for each subset $A$ of $\mN$ produce a ray $s_A$ in $D$ by replacing with fragments of $r$ the bypasses in the above sequence that are indexed by the elements of $A$. 

For $A,B\subseteq \mN$ the rays $s_A, s_B$ are equivalent if and only if the symmetric difference of $A$ and $B$ is finite, so the construction gives $2^{\aleph_0}$ distinct equivalence classes of rays.
\end{proof}

For a Morse matching $M$ on a graded poset $P$ denote by $\mC_M(P)$ the set of critical elements of $P$ with respect to $M$ and by $\mR_M(P)$ the family of equivalence classes of rays in $\mH_M(P)$. Let $\mC_M^n(P)=\{c\in\mC_M(P):\deg(c)=n\}$ and $\mR^n_M(P)=\{[r]\in\mR_M(P):r\text{ is an } n\text{-ray}\}$.

\begin{lemma}\label{reversing_lemma}
Let $P$ be a graded poset equipped with a Morse matching $M'$. Let $r$ be a ray in $P$ that is not a multiray. Then a Morse matching $M$ on $P$ exists such that $\mathcal{C}_{M}(P)=\mathcal{C}_{M'}(P)\cup \{c_{[r]}\},$ , where $c_{[r]}\in P\smallsetminus \mathcal{C}_{M'}(P)$, $\deg(c_{[r]})=\deg(r)$, and $\mathcal{R}_M(P)=\mathcal{R}_{M'}(P)\smallsetminus \{[r]\}$.
\end{lemma}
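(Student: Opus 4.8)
The plan is to construct $M$ from $M'$ by reversing a tail of the ray $r$, in the spirit of reversing a gradient path in the finite theory, and to use the hypothesis that $r$ is not a multiray precisely to rule out that this reversal creates a directed cycle. First I would normalise the ray: passing to the equivalent tail $r'=r_{j_0},r_{j_0+1},\dots$ one sees, exactly as observed before the lemma, that the degrees alternate between $i_0=\deg(r)$ and $i_0+1$, that the steps alternate between matching (up) and covering (down) arrows, and hence that $M'$ matches the tail vertices in consecutive pairs $(r_{j_0+2k},r_{j_0+2k+1})$. Since $r$ is not a multiray, I fix $N$ such that $r_n$ has no bypass for every $n\ge N$, and choose $j_1\ge\max(j_0,N)$ with $\deg(r_{j_1})=i_0$. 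I then let $M$ agree with $M'$ away from the tail and toggle the matching status of every edge of the tail $r_{j_1},r_{j_1+1},\dots$; concretely this re-pairs the tail as $(r_{j_1+2k+1},r_{j_1+2k+2})$, $k\ge 0$, and leaves $r_{j_1}$ unmatched. As all these tail vertices were matched \emph{within} the tail in $M'$, $M$ is again a matching whose only new unmatched vertex is $c_{[r]}:=r_{j_1}$, with $c_{[r]}\notin\mC_{M'}(P)$ and $\deg(c_{[r]})=i_0=\deg(r)$; this already settles the claim about critical cells.

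The heart of the argument, and where I expect the real difficulty, is acyclicity of $\mH_M(P)$. Note that $\mH_M(P)$ and $\mH_{M'}(P)$ differ only in that the forward tail arrows $r_{j_1+i}\to r_{j_1+i+1}$ have been replaced by their reverses. Suppose $\mH_M(P)$ contained a directed cycle $\gamma$; since $\mH_{M'}(P)$ is acyclic, $\gamma$ uses at least one reversed arrow. Cutting $\gamma$ at each of its visits to the vertices $r_{j_1},r_{j_1+1},\dots$, I split it into arcs, each of which is either a single reversed arrow $r_{j_1+k+1}\to r_{j_1+k}$ (tail index drop $1$) or an arc that is internally disjoint from these vertices. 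The net change of the tail index around $\gamma$ telescopes to $0$, so some arc of the second kind runs from $r_{j_1+d}$ to $r_{j_1+c}$ with $c>d$. Being a simple path from $r_{j_1+d}$ to a later ray vertex that (as $c>d$) cannot be a subpath of $r$, such an arc is a bypass of $r$ starting at $r_{j_1+d}$, with $j_1+d\ge N$, contradicting the choice of $N$. Hence $M$ is a Morse matching.

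Finally I would match up the ray classes to obtain $\mR_M(P)=\mR_{M'}(P)\smallsetminus\{[r]\}$. The class $[r]$ disappears, because any ray equivalent to $r$ must eventually traverse the forward tail arrows, which are no longer present in $\mH_M(P)$. For the remaining classes I would set up mutually inverse maps by passing to tails. A ray in $\mH_{M'}(P)$ whose class is not $[r]$ can use only finitely many forward tail arrows: otherwise, analysing its successive visits to the tail vertices as above and using acyclicity of $\mH_{M'}(P)$ to forbid downward off-ray jumps (which would close a cycle with a forward segment of $r$) and the absence of bypasses to forbid upward ones, these visits would have to be consecutive and increasing, forcing the ray to coincide with $r$. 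Thus its tail survives unchanged in $\mH_M(P)$. Dually, a ray in $\mH_M(P)$ can use only finitely many reversed arrows, since a drop of the tail index can never afterwards be undone without a bypass past $N$; so its tail lies in $\mH_{M'}(P)$ and is not equivalent to $r$. As equivalent rays share tails, this yields the required bijection. I expect the cycle-free verification of the second paragraph, together with its reuse to control rays in the third, to be the only genuinely delicate points; the rest is bookkeeping about the forced local structure of $M'$ along $r$.
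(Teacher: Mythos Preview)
Your proof is correct and follows essentially the same approach as the paper: pass to a bypass-free tail of $r$, shift the matching along that tail to create a single new critical vertex at its start, rule out cycles by observing that any off-tail segment of a putative cycle would either close a cycle in $\mH_{M'}(P)$ or be a forbidden bypass, and then reuse the same dichotomy to show any ray in $\mH_M(P)$ meets the tail only finitely often. Your telescoping ``tail index'' bookkeeping and your explicit treatment of both directions of the ray bijection are more detailed than the paper's terse ``using arguments similar as above'', but the underlying ideas coincide.
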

\begin{proof}
Let $r=\{r_0,r_1,\ldots\}$, $\deg(r)=n$. Since $r$ is not a multiray, me may assume $r$ has no bypasses. Moreover, we assume $\deg(r_0)=n$. Thus, for all $k\in\mN$ the pairs $(r_{2k}, r_{2k+1})$ belong to the matching $M'$. Let $M=M'\smallsetminus \{(r_{2k}, r_{2k+1}):k\in\mN\} \cup \{(r_{2k+1}, r_{2k+2}):k\in\mN\}$. 

We will show that $M$ is a Morse matching on $P$ such that $\mathcal{C}_{M}(P)=\mathcal{C}_{M'}(P)\cup\{r_0\}$ and every equivalence class in the family $\mathcal{R}_{M}(P)$ contains a representative belonging to a class in $\mathcal{R}_{M'}(P)\smallsetminus\{[r]\}$.

Clearly, $M$ is a matching. Suppose that $\mH_{M}(P)$ contains a cycle $c=\{c_0,c_1,\ldots,c_k\}$. Then some elements consecutive in the cycle $c$ belong to $r$; otherwise, $c$ would be a cycle in $\mH_{M'}(P)$. We may assume $c_0,c_k\in r$. Let $c_{i},c_{i+1}\ldots,c_{i+m}\not\in r$ be such that $c_{i-1},c_{i+m+1}\in r$. Thus $c_{i-1}=r_{a}, c_{i+m+1}=r_{b}$ for some $a,b\in\mN$. If $a< b$, then $\{c_{i-1}, c_i,\ldots, c_{i+m+1}\}$ would be a cycle in $\mH_{M'}(R)$, which is impossible. This implies $a<b$, but it means $\{c_{i-1}, c_i,\ldots, c_{i+m+1}\}$ is a bypass of $r$ in $\mH_{M'}(R)$, which is a contradiction. Therefore, $\mH_{M}(P)$ is acyclic.

It is obvious that $\mathcal{C}_{M}(P)=\mathcal{C}_{M'}(P)\cup\{r_0\}$. We now need to prove that essentialy no new rays are created when switching from $M'$ to $M$. But, using arguments similar as above, one shows that the intersection of $r$ with any ray $s$ in $\mH_{M}(P)$ must be finite. Thus, $s$ is equivalent to a ray in $\mH_{M'}(P)$.
\end{proof}

\begin{theorem}\label{reversing_thm}
Let $P$ be a graded poset equipped with a Morse matching $M'$ such that $\mathcal{R}_{M'}(P)$ is finite. Then a rayless Morse matching $M$ on $P$ exists such that \[\mathcal{C}_{M}(P)=\mathcal{C}_{M'}(P)\cup \{c_{[r]}:[r]\in\mathcal{R}_{M'}(P)\},\] where $c_{[r]}\in P\smallsetminus \mathcal{C}_{M'}(P)$, $c_{[r]}\not=c_{[r']}$ for $[r]\not=[r']$ and $\deg(c_{[r]})$ equals the degree of the ray $r$. 
\end{theorem}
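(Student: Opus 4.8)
The plan is to argue by induction on the finite cardinality $k=|\mathcal{R}_{M'}(P)|$, peeling off one equivalence class of rays at a time by means of Lemma \ref{reversing_lemma}. The base case $k=0$ is immediate: if $\mathcal{R}_{M'}(P)=\emptyset$ then $\mH_{M'}(P)$ is already rayless, so we may take $M=M'$ and introduce no new critical cells.

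The conceptual hinge of the inductive step is that the hypothesis of Lemma \ref{reversing_lemma}, namely that the ray being reversed is not a multiray, comes for free from Lemma \ref{prop_multiray}. Indeed, since $M'$ is a Morse matching the digraph $\mH_{M'}(P)$ is acyclic, and $\mathcal{R}_{M'}(P)$ finite means it has strictly fewer than $2^{\aleph_0}$ equivalence classes of rays; the contrapositive of Lemma \ref{prop_multiray} then shows that $\mH_{M'}(P)$ contains no multiray whatsoever. Consequently any chosen representative $r$ of any class $[r]\in\mathcal{R}_{M'}(P)$ is automatically a non-multiray, and Lemma \ref{reversing_lemma} applies to it.

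Concretely, I would fix a class $[r]\in\mathcal{R}_{M'}(P)$ and invoke Lemma \ref{reversing_lemma} to obtain a Morse matching $M_1$ with $\mathcal{C}_{M_1}(P)=\mathcal{C}_{M'}(P)\cup\{c_{[r]}\}$, where $c_{[r]}\in P\smallsetminus\mathcal{C}_{M'}(P)$ and $\deg(c_{[r]})$ equals the degree of $r$, and with $\mathcal{R}_{M_1}(P)=\mathcal{R}_{M'}(P)\smallsetminus\{[r]\}$. Since $M_1$ is again a Morse matching and $|\mathcal{R}_{M_1}(P)|=k-1$ is still finite, the induction hypothesis yields a rayless Morse matching $M$ with $\mathcal{C}_{M}(P)=\mathcal{C}_{M_1}(P)\cup\{c_{[r']}:[r']\in\mathcal{R}_{M_1}(P)\}$ and matching degrees. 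Unwinding the two nested unions gives precisely $\mathcal{C}_{M}(P)=\mathcal{C}_{M'}(P)\cup\{c_{[r']}:[r']\in\mathcal{R}_{M'}(P)\}$, as required, and $M$ is rayless because no equivalence classes of rays remain.

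The only remaining bookkeeping is the distinctness claim $c_{[r]}\neq c_{[r']}$, and I expect this, rather than any genuine difficulty, to be the point needing the most care. It follows from the ``newness'' clause $c_{[\cdot]}\in P\smallsetminus\mathcal{C}(P)$ that Lemma \ref{reversing_lemma} carries at every stage: each critical cell created later lies outside the set of critical cells accumulated so far, hence differs from every previously produced $c_{[\cdot]}$ and from every cell of $\mathcal{C}_{M'}(P)$. The subtlety one must not overlook is that finiteness of the class set is preserved under each application of Lemma \ref{reversing_lemma}, so that Lemma \ref{prop_multiray} can be reinvoked at every stage to rule out multirays in the successively modified digraphs $\mH_{M_1}(P),\mH_{M_2}(P),\dots$; this holds because each step strictly decreases the finite quantity $|\mathcal{R}(P)|$ by one.
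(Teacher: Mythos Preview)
Your proposal is correct and follows essentially the same route as the paper, which simply says that Lemma~\ref{prop_multiray} rules out multirays and that the result then follows from Lemma~\ref{reversing_lemma} by induction. Your version is in fact more careful than the paper's on two points: you make explicit the need to reinvoke Lemma~\ref{prop_multiray} at each stage of the induction (since the matching changes), and you spell out why the newly created critical cells are pairwise distinct and disjoint from $\mathcal{C}_{M'}(P)$.
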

\begin{proof}
By Lemma \ref{prop_multiray} the digraph $\mH_{M'}(P)$ does not contain a multiray. The theorem follows by induction from Lemma \ref{reversing_lemma}.
\end{proof}

One could be tempted to prove, using transfinite induction, an analogue of Theorem \ref{reversing_thm} in the situation of infinitely many equivalence classes of rays, but no multirays. However, a problem arises at limit ordinals: in some cases after reversing infinitely many rays a new ray may be created. This problem does not occur in the case of discrete Morse matchings on $1$-dimensional regular CW-complexes, as the following theorem shows.

\begin{theorem}\label{reversing_thm2}
Let $X$ be a $1$-dimensional regular CW-complex and let $M'$ be a Morse matching on $X$. A rayless Morse matching $M$ on $X$ exists such that \[C_M(\mP(X))=C_{M'}(\mP(X))\cup\{c_{[r]}:[r]\in\mR_{M'}(\mP(X))\},\] where $c_{[r]}\in\mP(X)\smallsetminus C_{M'}(\mP(X))$, $c_{[r]}\not=c_{[r']}$ for $[r]\not=[r']$ and $\deg(c_{[r]})=0$ for all $[r]$.
\end{theorem}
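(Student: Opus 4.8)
The plan is to avoid transfinite induction altogether and instead reverse representatives of all the ray classes simultaneously, exploiting the rigidity of the one-dimensional situation. First I would record the basic structural features of $\mH_{M'}(\mP(X))$. Since $X$ is $1$-dimensional and regular, $\mP(X)$ is graded with only two degrees: the $0$-cells (vertices) and the $1$-cells (edges), each edge covering exactly its two distinct endpoints. Thus every vertex is either critical or matched to a single incident edge, and the forward flow in $\mH_{M'}(\mP(X))$ is \emph{deterministic}: from a matched vertex $v$ the unique outgoing arrow leads to its matched edge $e$, and from $e$ the unique outgoing arrow leads to the other endpoint of $e$. Consequently a ray, put in normal form $v_0, e_0, v_1, e_1, \ldots$ with $\deg(v_0)=0$, is exactly an infinite forward orbit of matched vertices, it admits no bypass, and hence $\mH_{M'}(\mP(X))$ contains no multiray. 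In particular Lemma \ref{prop_multiray} imposes no constraint here, so the number of ray classes is allowed to be arbitrary.

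Next I would translate ray-equivalence into the language of these orbits. Determinism implies that two rays sharing a matched vertex coincide from that point on; therefore two rays are equivalent precisely when their forward orbits eventually merge, and the elements of $\mR_{M'}(\mP(X))$ correspond bijectively to the merging classes of infinite forward orbits. Using the axiom of choice I would select one representative ray $r^{[C]} = v_0^{[C]}, e_0^{[C]}, v_1^{[C]}, \ldots$ in each class $[C]$. The crucial disjointness observation is again a consequence of determinism: two representatives from different classes share neither a matched vertex nor an edge, since a shared edge is matched in $M'$ to a unique vertex, and any shared matched vertex would force the two rays to merge and hence be equivalent.

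I would then define $M$ by reversing all the chosen representatives at once, in the manner of Lemma \ref{reversing_lemma}: remove every matched pair $(v_i^{[C]}, e_i^{[C]})$ and insert instead the pairs $(e_i^{[C]}, v_{i+1}^{[C]})$, leaving all remaining pairs of $M'$ untouched. By the disjointness just noted this is a well-defined matching, each spine vertex $v_i^{[C]}$ with $i\ge 1$ being rematched to $e_{i-1}^{[C]}$ while $v_0^{[C]}$ is released and becomes critical. Reading off the critical cells is then immediate: one obtains $\mC_M(\mP(X)) = \mC_{M'}(\mP(X)) \cup \{v_0^{[C]} : [C] \in \mR_{M'}(\mP(X))\}$, the new critical cells are vertices (degree $0$), they were matched under $M'$, and they are pairwise distinct because the classes are disjoint; so we may set $c_{[r]} = v_0^{[C]}$.

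The heart of the argument, and the step I expect to be the main obstacle, is to prove simultaneously that $M$ is acyclic and rayless; this is exactly where one-dimensionality prevents the limit-ordinal pathology. I would establish the single stronger statement that \emph{every} forward orbit in $\mH_M(\mP(X))$ is finite. A vertex whose $M'$-orbit was finite never enters the set $V_\infty$ of vertices lying on some ray, hence never meets a reversed spine edge, so its orbit is unchanged and still finite. A vertex of some class $[C]$ flows, along untouched branch edges, into the spine of $r^{[C]}$, after which the reversed spine carries it strictly downward $v_i^{[C]} \to e_{i-1}^{[C]} \to v_{i-1}^{[C]} \to \cdots \to v_0^{[C]}$ to the critical vertex $v_0^{[C]}$, where it stops. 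The one point needing care is to check that reversing a spine creates no new escape to infinity: the only new outgoing arrows are $v_{i+1}^{[C]} \to e_i^{[C]}$, and any forward flow reaching $v_{i+1}^{[C]}$ must already lie in $V_\infty$ and in the class $[C]$, so no previously finite orbit is lengthened into an infinite one. Finiteness of all forward orbits forbids both directed cycles and directed rays, giving acyclicity and raylessness at once and completing the proof.
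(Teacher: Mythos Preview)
Your argument is correct but differs from the paper's. The paper proceeds by transfinite induction, applying Lemma~\ref{reversing_lemma} once for each ray class and using the observation that in dimension~$1$ there is no directed path in $\mH_{M'}(\mP(X))$ between elements of inequivalent rays, so that at limit ordinals no new rays can appear. You instead reverse all chosen spine representatives simultaneously and argue directly that every forward $M$-path is finite, exploiting the same determinism of the one-dimensional flow. Your route avoids the transfinite machinery and the somewhat delicate limit-step verification, at the price of having to establish acyclicity and raylessness of $M$ from scratch rather than inheriting them step-by-step from Lemma~\ref{reversing_lemma}; the paper's route is shorter to state because it recycles the lemma, but your direct ``all forward paths terminate'' argument is arguably cleaner and makes the role of determinism fully explicit.

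Two small remarks on presentation. First, when you say ``the only new outgoing arrows are $v_{i+1}^{[C]}\to e_i^{[C]}$'', this is not literally true: the arrows $e_i^{[C]}\to v_i^{[C]}$ are also new in $\mH_M$. This does not affect your conclusion, since both kinds of new arrows lie on the spine and point toward $v_0^{[C]}$, but you should phrase the sentence accordingly. Second, when deducing acyclicity from finiteness of forward orbits, you should note explicitly that a hypothetical cycle cannot contain a critical edge (no incoming arrow) or a critical vertex (no outgoing arrow), hence lies entirely among matched cells where the forward flow is deterministic; otherwise the phrase ``forward orbit'' is ambiguous at critical edges, which have out-degree~$2$.
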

\begin{proof}
We only sketch the proof, as we do not find the theorem very important. First note that, since cells of degree $1$ contain exactly two $0$-cells, for a Morse matching $N$ on $X$ no ray $r$ in $\mH_{N}(X)$ such that $\deg(r_0)=\deg(r)$ has a bypass. For the same reason there is no directed path between elements of two rays $r,r'$ in $\mH_{N}(X)$ such that $[r]\not=[r']$. The first observation allows us to use Lemma \ref{reversing_lemma}. The second assures us that if we use Lemma \ref{reversing_lemma} in a transfinite induction process, at the limit ordinals no new rays are created.
\end{proof}

It should be noted that the ray-reversal technique utilised in the above proofs was earlier used in \cite{ayala-number}.
\end{section}

\begin{section}{Homological discrete Morse theory}\label{homological}
In \cite{freij} Freij extracts the algebraic essence of discrete Morse theory and observes that given a chain complex with an operator resembling the Morse matching one can define a discrete Morse chain complex whose homology groups are isomorphic to those of the given complex, provided that the mentioned operator satisfies a condition close to nilpotency, which is an algebraic analogue of the lack of directed rays. 

In this section we derive, as a direct corollary of Freij's results and Theorems \ref{reversing_thm}, \ref{reversing_thm2}, a homological version of the main theorem of discrete Morse theory, which for finite complexes was developed in \cite[Sections 6 - 8]{forman}, and of the discrete Morse inequalities \cite[Corollaries 3.6 and 3.7]{forman} for Morse matchings on homologically admissible posets inducing finitely many equivalence classes of rays and for Morse matchings on $1$-dimensional regular CW-complexes.

The following definitions were introduced in the finite case by Minian \cite{minian}.

\begin{definition}
A poset $P$ is called:
\begin{itemize}
\item \textit{cellular}, if $P$ is graded with finite principal ideals and $\mK(\hat{x}\!\downarrow)$ has the homology of a \mbox{$(\deg(x)-1)$-dimensional} sphere for all $x\in X$;
\item \textit{homologically admissible} (or h-admissible), if $P$ is graded with finite principal ideals, \mbox{$\mK(\hat{x}\!\downarrow\smallsetminus\{y\})$} is connected and the homology groups $H_n(\mK(\hat{x}\!\downarrow\smallsetminus\{y\}))$, $n\geq 1$  are trivial for all $x\in X$ and every maximal element $y\in\hat{x}\!\downarrow$.
\end{itemize}

We shall call elements of degree $n$ of cellular posets \textit{$n$-cells}.
\end{definition}

Note that if $X$ is a regular CW-complex, then $\mP(X)$ is a cellular poset and an $n$-cell in $\mP(X)$ is just an $n$-cell in $X$.

As in \cite[Section 3]{minian} one can prove that the homology (with integer coefficients) of an infinite cellular poset $P$ can be computed using the \textit{cellular chain complex} $(\mC_*,d)$, which in dimension $n$ consists of the free abelian group generated by the $n$-dimensional elements of $P$. Moreover, every h-admissible poset is cellular and the differential $d\colon\mC_n\to \mC_{n-1}$ of its cellular chain complex has the form $d(x)=\sum_{y\prec x}\epsilon(x,y)y$ with $\epsilon(x,y)\in\{-1,1\}$.

Given a discrete Morse matching $M'$ on a h-admissible poset $P$ such that $\mR_{M'}(P)$ is finite we replace it as in Theorem \ref{reversing_thm} by a rayless Morse matching $M$. Now we may define a homomorphism \mbox{$V\colon\mC_{*}\to\mC_{*+1}$}. Given an $n$-cell $y\in P$ we put $V(y)=-\epsilon(x,y)x$, where $x\in P$ is the unique $(n+1)$-cell matched by $M$ with $y$ if such a cell exists, and otherwise we put $V(y)=0$. $V$ extends linearly to whole $\mC_{*}$.

\begin{proposition}
$V$ is a gradient vector field in the sense of \cite[Definition 3.1]{freij}, i.e. the following conditions are satisfied:
\begin{itemize}
\item $V\colon\mC_n\to \mC_{n+1}$ is a homomorphism for each $n\in\mN$,
\item $V^2=0$,
\item for every $x\in C$ there is some $k\in\mN$ with $V(1-dV)^k x=0$.
\end{itemize}
\end{proposition}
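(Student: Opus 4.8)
The three bullet points split naturally into two formal verifications and one substantive one: the homomorphism property and $V^2=0$ are consequences of the definition and of $M$ being a matching, while the third (near-nilpotency) condition is exactly where the raylessness of $M$---through the finiteness of the digraphs $D_M(x)$ granted by Lemma~\ref{tree_lemma}---enters. I would treat them in that order.

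\textbf{First two conditions.} That each $V\colon\mC_n\to\mC_{n+1}$ is a homomorphism is immediate: $V$ is prescribed on the free generators (the $n$-cells) and extended $\mathbb{Z}$-linearly, and h-admissibility is precisely what makes the incidence numbers $\epsilon(x,y)\in\{-1,1\}$ appearing in the definition well defined. For $V^2=0$ I would argue on generators. If an $n$-cell $y$ is matched upward with an $(n+1)$-cell $w$, then $V(y)=-\epsilon(w,y)\,w$; but $w$ is now matched (with $y$), so by the matching property it has no upward partner of degree $n+2$, whence $V(w)=0$ and $V^2(y)=0$. On cells with $V=0$ there is nothing to check, so $V^2=0$ follows solely from $M$ being a matching.

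\textbf{Third condition.} The key local fact to establish is that, for an $n$-cell $x$ matched upward with $w$, the chain $(1-dV)(x)$ is a $\mathbb{Z}$-linear combination of the elements of $M_+(x)$. Indeed $d(w)=\sum_{u\prec w}\epsilon(w,u)\,u$ splits as the $x$-term plus a combination of the cells of $M_+(x)=\{u\prec w:u\neq x\}$, so $dV(x)$ consists of a $\pm x$ contribution together with a combination of $M_+(x)$; a direct sign computation---this is where the precise sign chosen in the definition of $V$ is used---shows that the $x$-contribution cancels against the leading $1$ of $1-dV$. When $x$ is not matched upward (critical or matched downward) one has $V(x)=0$ and hence $(1-dV)(x)=x$. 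Since $M_+(u)\subseteq D_M(x)$ for every $u\in D_M(x)$, it follows by iteration that all the chains $(1-dV)^k x$ are supported on the (finite, by Lemma~\ref{tree_lemma}) vertex set of $D_M(x)$, and that each application of $1-dV$ either freezes the support at a sink $u$ (a cell with $M_+(u)=\emptyset$, so $V(u)=0$) or pushes it strictly forward along the arrows of $D_M(x)$. As every directed path in $D_M(x)$ has length at most $L_M(x)$, after $L_M(x)$ applications the entire support lies on sinks; applying $V$ then yields $V(1-dV)^{L_M(x)}x=0$. Taking $k=L_M(x)$ settles generators, and for an arbitrary chain one takes the maximum of the $L_M(x_i)$ over its finitely many generators, using that once a chain is supported on sinks it stays so under further applications of $1-dV$.

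\textbf{Expected obstacle.} The formal conditions cost almost nothing; the work concentrates in the third paragraph. The delicate points are the exact sign bookkeeping that produces the cancellation of the $x$-term of $(1-dV)(x)$, and the verification that the supports of the iterates really stay inside $D_M(x)$ and advance strictly along its arrows. Once these are in hand, the conclusion is a direct consequence of the finiteness of $D_M(x)$ from Lemma~\ref{tree_lemma}, with $L_M(x)$ serving as the explicit bound on $k$ for which it was introduced.
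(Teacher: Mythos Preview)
Your approach is essentially identical to the paper's: the first two conditions are dispatched exactly as you do (linearity by definition, $V^2=0$ from the matching property), and the third is handled by observing that $(1-dV)x$ is supported on $M_+(x)$ when $M_+(x)\neq\emptyset$ and equals $x$ otherwise, so that Lemma~\ref{tree_lemma} and the bound $L_M(x)$ finish the argument for cells, with linearity covering general chains. Your write-up is more explicit about the iteration and the passage to arbitrary chains, but the structure and the key computation match the paper's proof line for line.
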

\begin{proof}
The first condition is obviously satisfied. The second follows from the fact that $M$ is a matching. To prove the third condition first assume that $x\in C$ is an $n$-cell. Note that if $M_+(x)\not=\emptyset$, then $(1-dV)x=\sum_{y\in M_+(x)}\epsilon_y y$, where $\epsilon_y\in\{-1,1\}$, and otherwise $(1-dV)x=x$. By Lemma \ref{tree_lemma} we must thus have $V(1-dV)^k x=0$ for $k$ greater than $L_M(x)$. This proves the condition is true for $x$ being a cell. The general case follows from the linearity of $V$.
\end{proof}

This allows us to define the Morse complex $(\mM_*,\overline{d})$.

\begin{corollary}\label{homological_main_thm}
There exists a chain complex $(\mM_*,\overline{d})$, where $\mM_n$ is a free abelian group spanned by $\mC_{M'}^n(P)\cup\mR_{M'}^n(P)$ (or equivalently by $\mC_{M}^n(P)$), such that the homology groups of $(\mM_*,\overline{d})$ and $(\mC_*,d)$ are isomorphic.
\end{corollary}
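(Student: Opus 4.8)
The plan is to treat the corollary as what it is labelled --- a direct corollary --- by feeding the gradient vector field $V$ into Freij's machinery and then translating his abstract output into the combinatorial language of the matchings $M$ and $M'$. Since the preceding Proposition has already verified that $V$ satisfies all three conditions of \cite[Definition 3.1]{freij}, Freij's homological main theorem applies to the pair $(\mC_*, d)$ and $V$ and produces a Morse chain complex $(\mM_*, \overline{d})$ together with an isomorphism on homology. Thus no new homological computation is required; the work lies entirely in identifying $\mM_*$ explicitly.

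First I would record exactly which basis elements span $\mM_n$. In Freij's formalism the Morse complex is, in each degree, free abelian on the \emph{$V$-critical} cells, i.e.\ those basis elements $y$ with $V(y)=0$ and $y\notin\operatorname{im}V$. From the definition $V(y)=-\epsilon(x,y)x$ (with $x$ the cell matched upward with $y$, when it exists) one reads off that $V(y)=0$ exactly when $y$ is \emph{not} matched with a cell of higher degree, whereas $y\in\operatorname{im}V$ exactly when $y$ is matched with a cell of lower degree, since distinct matched cells are distinct basis vectors. Consequently the $V$-critical $n$-cells are precisely the elements of $\mC^n_M(P)$, the cells left critical by the rayless matching $M$. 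This already gives the parenthetical description ``$\mM_n$ spanned by $\mC^n_M(P)$''.

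Next I would pass back to $M'$ using Theorem \ref{reversing_thm}. That theorem writes $\mC_M(P)=\mC_{M'}(P)\sqcup\{c_{[r]}:[r]\in\mR_{M'}(P)\}$ with $[r]\mapsto c_{[r]}$ a degree-preserving bijection, so in degree $n$ one has $\mC^n_M(P)=\mC^n_{M'}(P)\sqcup\{c_{[r]}:[r]\in\mR^n_{M'}(P)\}$. Identifying $c_{[r]}$ with $[r]$ turns the free abelian group on $\mC^n_M(P)$ into the free abelian group on $\mC^n_{M'}(P)\cup\mR^n_{M'}(P)$, which is the stated form of $\mM_n$; the differential $\overline{d}$ is the one furnished by Freij's construction and carries no further choices. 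In the $1$-dimensional case I would run the identical argument with Theorem \ref{reversing_thm2} in place of Theorem \ref{reversing_thm}, which is what lets the finiteness hypothesis on $\mR_{M'}(P)$ be dropped there.

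The main obstacle, and the reason the Proposition rather than this corollary carries the real content, is legitimacy of Freij's theorem in the non-compact, possibly non-locally-finite setting: here $\mC_n$ typically has infinite rank and the flow operators underlying Freij's construction are a priori infinite sums of iterates of $1-dV$. What saves the argument is precisely the third (pointwise-nilpotency) condition established in the Proposition, namely that $V(1-dV)^k x=0$ once $k>L_M(x)$, with $L_M(x)$ finite by Lemma \ref{tree_lemma}. This forces every such operator to stabilize when evaluated on any individual chain, so it is well-defined term by term and Freij's purely algebraic arguments go through unchanged. I would therefore make a point of checking that \cite{freij} nowhere invokes finiteness of rank or of dimension --- only the three stated algebraic conditions --- and that freeness of $\mM_n$ over $\mZ$, together with the integral homology isomorphism, is preserved, since these are the only points at which non-compactness could cause trouble.
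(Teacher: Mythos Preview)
Your proposal is correct and follows the same approach as the paper, which simply writes ``This follows from \cite[Chapter 3]{freij}.'' You have merely unpacked what that citation entails: the identification of the $V$-critical basis elements with $\mC^n_M(P)$, the translation back to $M'$ via Theorem~\ref{reversing_thm}, and the observation that pointwise nilpotency makes Freij's purely algebraic argument go through without any finiteness hypotheses on rank.
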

\begin{proof}
This follows form \cite[Chapter 3]{freij}.
\end{proof}

As a further corollary, derived in the standard way, as in \cite[pages 28-31]{milnor}, the strong and weak Morse inequalities are true in this setting.

\begin{corollary}
Let $m_n$ denote the number of critical $n$-cells in $P$, $r_n$ denote the number of equivalence classes of $n$-rays in $P$ and $b_n$ be the $n$-th Betti number of $P$. For all $N\in\mN$:
\begin{enumerate}
\item $\sum_{i\leq N} (-1)^{N-i} (m_i+r_i)\geq \sum_{i\leq N} (-1)^{N-i} b_i$, provided that all $m_i$ are finite for $i\leq N$,
\item $b_N\leq m_N+r_N$,
\item $\sum_{i=0}^{\infty} (-1)^i b_i=\sum_{i=0}^{\infty} (-1)^i (m_i+r_i)$, provided that all $m_i$ are finite and all but finitely many $m_i$ are zero.
\end{enumerate}
\end{corollary}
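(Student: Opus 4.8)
The plan is to peel off the geometry immediately and reduce everything to a standard rank count on a single free chain complex, exactly in the manner of \cite[pp.~28--31]{milnor}. By Corollary \ref{homological_main_thm} we have a chain complex $(\mM_*,\overline{d})$ in which each $\mM_n$ is free abelian of rank $c_n:=m_n+r_n$ and whose homology is isomorphic to that of $P$; in particular $\operatorname{rank}H_n(\mM_*,\overline{d})=b_n$. Hence all three assertions become claims about this one free complex, and nothing about the poset $P$ beyond Corollary \ref{homological_main_thm} will be used.

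Next I would introduce the ranks of cycles and boundaries. Writing $Z_n=\ker\overline{d}_n$ and $B_n=\operatorname{im}\overline{d}_{n+1}$, set $z_n=\operatorname{rank}Z_n$ and $\beta_n=\operatorname{rank}B_n$, with the convention $\beta_{-1}=0$. From the short exact sequences $0\to Z_n\to\mM_n\to B_{n-1}\to 0$ and $0\to B_n\to Z_n\to H_n(\mM_*,\overline{d})\to 0$, together with additivity of rank on short exact sequences of abelian groups, I obtain the two identities $c_n=z_n+\beta_{n-1}$ and $b_n=z_n-\beta_n$; subtracting them yields the key relation
\[ c_n-b_n=\beta_{n-1}+\beta_n\geq 0. \]
Every group appearing here is a subquotient of the finitely generated group $\mM_n$, so $z_n,\beta_n,b_n$ are all finite as soon as $c_n$ is; and since $r_n$ is always finite (because $\mathcal{R}_{M'}(P)$ is finite), finiteness of $c_n$ is equivalent to finiteness of $m_n$.

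Given the key relation, the three parts fall out mechanically. Part (2) is simply $c_n-b_n\geq 0$, that is $b_n\leq m_n+r_n$, and when $m_n$ is infinite the inequality holds vacuously. For part (1) I would substitute $c_i-b_i=\beta_{i-1}+\beta_i$ into $\sum_{i\leq N}(-1)^{N-i}(c_i-b_i)$; the sum telescopes, the terms $\beta_j$ with $j<N$ cancelling in consecutive pairs while $\beta_{-1}=0$, leaving exactly $\beta_N\geq 0$, which is precisely the strong inequality. For part (3) the hypotheses force every $c_i$ to be finite and all but finitely many to vanish, so $\beta_N=0$ for all large $N$; then the $N$-th telescoped sum is $0$, and multiplying by $(-1)^N$ and letting $N\to\infty$ gives $\sum_i(-1)^i b_i=\sum_i(-1)^i(m_i+r_i)$.

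The only real care needed, and the closest thing to an obstacle, is the finiteness bookkeeping: one must check that the rank identities are legitimate under each proviso (handling a possibly infinite $m_n$ by the triviality of the inequality in part (2), and verifying that the relevant $b_i$ are finite whenever the $c_i$ in play are finite), and keep track of the boundary convention $\beta_{-1}=0$ when telescoping. Beyond this the argument is the classical Morse-inequality computation and introduces no new ideas.
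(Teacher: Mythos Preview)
Your proposal is correct and follows exactly the route indicated in the paper: invoke Corollary~\ref{homological_main_thm} to pass to the free chain complex $(\mM_*,\overline{d})$ with $\operatorname{rank}\mM_n=m_n+r_n$, and then run the classical rank/telescoping computation from \cite[pp.~28--31]{milnor}. The paper gives no further detail beyond this reference, so your write-up is in fact a faithful expansion of the intended argument, with the finiteness bookkeeping handled correctly.
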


Using Theorem \ref{reversing_thm2} instead of Theorem \ref{reversing_thm} we could repeat the above argument for arbitrary Morse matchings on $1$-dimensional regular CW-complexes, obtaining in particular the discrete Morse inequalities of \cite[Theorem 3.1]{ayala-morse_ineq2} as a corollary.
\end{section}

\begin{section}{Homotopical discrete Morse theory}\label{homotopical}
While the results of Freij \cite{freij} utilised in Section \ref{homological} allowed us to derive at once the homological version of the main theorem of discrete Morse theory in the infinite setting, up to the best of the author's knowledge no similar results concerning the topological (or, more precisely, homotopical) version of the theorem were published. In this section we shall fill the gap.

As in the previous section, following \cite{minian} we will work with a class of posets strictly larger than the class of face posets of regular CW-complexes. For the latter class some technicalities, such as Lemma \ref{multiple_glue}, could be omitted.

\begin{definition}
A poset $P$ is called \textit{admissible} if $P$ is graded with finite principal ideals and $\mK(\hat{x}\!\downarrow\smallsetminus\{y\})$ is contractible for all $x\in X$ and all $y$ maximal in $\hat{x}\!\downarrow$.
\end{definition}

Of course, admissible posets are homologically admissible. Also, face posets of regular CW-complexes are admissible.

\begin{lemma}\label{multiple_glue}
Let $P$ be an admissible poset and let $A=\{x_i:i\in I\}$ be a set of elements maximal in $P$. Then $\mK(P)$ is homotopy equivalent to the CW-complex $X=\mK(P\smallsetminus A)\cup \bigcup_{i\in I} e_i$ obtained by attaching to $\mK(P\smallsetminus A)$ one $\deg(x_i)$-dimensional cell $e_i$ for each $i\in I$.
\end{lemma}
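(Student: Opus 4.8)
The plan is to realize $\mK(P)$ as the result of coning off the links of the vertices $x_i$ inside $\mK(P\smallsetminus A)$, to identify each such link up to homotopy with a sphere of the correct dimension, and then to invoke homotopy invariance of the mapping cone to replace the cones by genuine cells.

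First I would record the combinatorial structure of the stars. Since each $x_i$ is maximal, distinct $x_i,x_j$ are incomparable, so no chain of $P$ contains two elements of $A$; consequently the open stars of the $x_i$ in $\mK(P)$ are pairwise disjoint. For each $i$ the closed star of $x_i$ is the simplicial cone $x_i*\mK(\hat{x_i}\!\downarrow)$ on its link (by maximality $\hat{x_i}\!\uparrow=\emptyset$, so the link is exactly $\mK(\hat{x_i}\!\downarrow)$), and one has $\mathrm{star}(x_i)\cap\mK(P\smallsetminus A)=\mK(\hat{x_i}\!\downarrow)$ while $\mathrm{star}(x_i)\cap\mathrm{star}(x_j)\subseteq\mK(P\smallsetminus A)$ for $i\neq j$. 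Hence $\mK(P)$ is precisely the mapping cone of the inclusion $\iota\colon\bigsqcup_{i\in I}\mK(\hat{x_i}\!\downarrow)\hookrightarrow\mK(P\smallsetminus A)$ of the disjoint union of the links, a description valid for arbitrary (possibly infinite) $I$.

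The technical heart is the claim that $\mK(\hat{x}\!\downarrow)\simeq S^{\deg(x)-1}$ for every $x\in P$, which I would prove by induction on $\deg(x)$, the base case $\deg(x)=0$ being $\mK(\emptyset)=\emptyset=S^{-1}$. For the inductive step choose a maximal element $y\in\hat{x}\!\downarrow$, of degree $\deg(x)-1$. As above, $\mK(\hat{x}\!\downarrow)$ is the mapping cone of the inclusion $\mK(\hat{y}\!\downarrow)\hookrightarrow\mK(\hat{x}\!\downarrow\smallsetminus\{y\})$ of the link of $y$. By admissibility $\mK(\hat{x}\!\downarrow\smallsetminus\{y\})$ is contractible; and for a map $f\colon Y\to Z$ with $Z$ contractible the subcomplex inclusion $Z\hookrightarrow\mathrm{Cone}(f)$ is a cofibration with contractible domain, so collapsing it is a homotopy equivalence onto $\mathrm{Cone}(f)/Z=\Sigma Y$, whence $\mathrm{Cone}(f)\simeq\Sigma Y$. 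Applying this with $Y=\mK(\hat{y}\!\downarrow)$ and invoking the inductive hypothesis gives $\mK(\hat{x}\!\downarrow)\simeq\Sigma\,\mK(\hat{y}\!\downarrow)\simeq\Sigma S^{\deg(x)-2}=S^{\deg(x)-1}$, completing the induction.

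Finally I would assemble the pieces. Choosing for each $i$ a homotopy equivalence $h_i\colon S^{\deg(x_i)-1}\to\mK(\hat{x_i}\!\downarrow)$ and setting $\phi_i=\iota\circ h_i$, the homotopy invariance of the mapping cone under precomposition with a homotopy equivalence shows that the mapping cone of $\iota$ is homotopy equivalent to the mapping cone of $\bigsqcup_i\phi_i\colon\bigsqcup_i S^{\deg(x_i)-1}\to\mK(P\smallsetminus A)$. The latter is by definition $\mK(P\smallsetminus A)$ with one $\deg(x_i)$-cell attached along $\phi_i$ for each $i$, that is, the complex $X$; hence $\mK(P)\simeq X$. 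I expect the main obstacle to be the inductive sphere identification of the third step, in particular verifying carefully that the admissibility hypothesis supplies exactly the contractible base needed at each stage, whereas the simultaneous, possibly infinite, family of attachments is handled uniformly once the stars are seen to be independent.
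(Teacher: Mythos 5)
Your proof is correct, and its skeleton coincides with the paper's: both decompose $\mK(P)$ as the pushout obtained by gluing the closed stars $x_i * \mK(\hat{x_i}\!\downarrow)$ onto $\mK(P\smallsetminus A)$ along the links, replace each link by a sphere of dimension $\deg(x_i)-1$ via a homotopy equivalence, and conclude by homotopy invariance of the glued-up space --- the paper does this last step with an explicit cube of pushouts and the gluing theorem of \cite[Theorem 7.5.7]{brown}, which is exactly the ``homotopy invariance of the mapping cone'' you invoke. The genuine difference is the middle step: the paper simply cites \cite[Proposition 2.10]{minian} for the homotopy equivalences $S^{\deg(x_i)-1}\to\mK(\hat{x_i}\!\downarrow)$, whereas you reprove this from admissibility by induction on degree, coning off a maximal $y\in\hat{x}\!\downarrow$ and using contractibility of $\mK(\hat{x}\!\downarrow\smallsetminus\{y\})$ to collapse and obtain a suspension. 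This buys a self-contained argument where the paper outsources its key input, at the cost of length; your induction is sound (including the degenerate case $\deg(x)=1$, provided you keep the join conventions $\mathrm{Cone}(\emptyset)=\{\ast\}$ and $\Sigma\emptyset=S^0$ throughout). One wording caveat: what you call ``the mapping cone of $\iota$'' must be the cone-wise construction $\mK(P\smallsetminus A)\cup_{\iota}\bigsqcup_{i\in I}\mathrm{Cone}\bigl(\mK(\hat{x_i}\!\downarrow)\bigr)$, with one apex per index, not the literal mapping cone of a map out of the disjoint union, which has a single apex and would identify all the points $x_i$; since you apply the same cone-wise reading when you identify the mapping cone of $\bigsqcup_i\phi_i$ with $X$, the argument is internally consistent and correct, but stating it as a pushout (as the paper does) removes the ambiguity.
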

\begin{proof}
Let $p_i=\deg(x_i)$. Since $P$ is admissible, by \cite[Proposition 2.10]{minian} there exists, for each $i\in I$, a homotopy equivalence $f_i\colon S^{p_i-1}\to \mK(\hat{x_i}\!\downarrow)$. Let the attaching map of $e_i$ be $j_i\circ f_i$, where \mbox{$j_i\colon \mK(\hat{x_i}\!\downarrow)\hookrightarrow \mK(P\smallsetminus A)$} and let $\phi_i$ denote the characteristic map of $e_i$.

We have the following commutative diagram of spaces
\[\xymatrix@M=5pt{
& \bigsqcup_{i\in I}\mK(\hat{x_i}\!\downarrow)\ar@{_(->}'[d][dd]\ar[rr]^{\bigsqcup_{i\in I}j_i} & & \mK(P\smallsetminus A)\ar@{_(->}[dd]\\
\bigsqcup_{i\in I}S_i\ar@{_(->}[dd]\ar[ur]^{\bigsqcup_{i\in I}f_i}\ar[rr]^(0.6){\bigsqcup_{i\in I}j_i\circ f_i} & & \mK(P\smallsetminus A)\ar@{_(->}[dd]\ar@{=}[ur]\\ 
& \bigsqcup_{i\in I}\mK(x_i\!\downarrow)\ar@{^(->}'[r][rr] & & \mK(P)\\
\bigsqcup_{i\in I}D_i\ar[rr]^{\bigsqcup_{i\in I}\phi_i}\ar[ur]^{\bigsqcup_{i\in I}F_i} & & \mK(P\smallsetminus A)\cup \bigcup_{i\in I} e_i\ar[ur]^F
}\]
where $S_i=S^{p_i-1}$, $D_i=D^{p_i}$, the front and the back squares are pushouts, $F_i([x,t])=[f(x),t]$ for $x\in D_i$ (we treat $D_i, \mK(x\!\downarrow)$ as cones over $S_i, \mK(\hat{x}\!\downarrow)$ respectively) and the map $F$ between pushouts is determined by the maps $\bigsqcup_{i\in I}f_i, \bigsqcup_{i\in I}F_i, \id_{\mK(P\smallsetminus A)}$.

By the gluing theorem for adjunction spaces proved in \cite[Theorem 7.5.7]{brown} the map $F$ is a homotopy equivalence.
\end{proof}

We may now state and prove the main theorem of the present article.

\begin{theorem}\label{mainthm}
Let $P$ be an admissible poset equipped with a discrete Morse matching $M$ such that $\mR_M(P)$ is finite. Then $\mK(P)$ is homotopy equivalent to a CW-complex $P_M$ that for all $n\in\mN$ has exactly one $n$-cell for each element of $\mC^n_M(P)\cup \mR^n_M(P)$. 
\end{theorem}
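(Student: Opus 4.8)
The plan is to first reduce to the rayless case, and then to build $\mK(P)$ cell by cell along the matching, cancelling matched pairs while keeping one genuine cell for each critical element.

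First I would apply Theorem \ref{reversing_thm} to replace $M$ by a \emph{rayless} Morse matching $\tilde M$ on $P$ whose set of critical elements of degree $n$ is exactly $\mC^n_M(P)\cup\mR^n_M(P)$. It then suffices to prove the following: if $P$ is admissible and $N$ is a rayless Morse matching, then $\mK(P)$ is homotopy equivalent to a CW-complex with exactly one $\deg(c)$-cell for each critical element $c$ of $N$. The building block is Lemma \ref{multiple_glue}: for a lower set (order ideal) $Q\subseteq P$ and a maximal element $x$ of $Q$ it gives $\mK(Q)\simeq\mK(Q\smallsetminus\{x\})\cup e_x$, where $e_x$ is a $\deg(x)$-cell attached along the homotopy sphere $\mK(\hat{x}\!\downarrow)\simeq S^{\deg(x)-1}$ (note that $\hat{x}\!\downarrow$ is the same whether computed in $Q$ or in $P$). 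Accordingly I would realise $\mK(P)$ as a transfinite colimit $\mK(P)=\bigcup_\alpha\mK(Q_\alpha)$ along an increasing chain of lower sets $Q_\alpha\subseteq P$, where each $Q_{\alpha+1}$ is obtained from $Q_\alpha$ by adjoining one or two new maximal elements dictated by $N$.

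Second, I would build the filtration so that $N$ is respected: at each successor step I adjoin to $Q_\alpha$ either (i) a single critical element $c$ minimal in $P\smallsetminus Q_\alpha$, or (ii) a matched pair $(x,y)$ with $x\prec y$, where $x$ is minimal in $P\smallsetminus Q_\alpha$ and every facet of $y$ other than $x$ already lies in $Q_\alpha$, so that $Q_\alpha\cup\{x\}$ and $Q_\alpha\cup\{x,y\}$ are again lower sets. The combinatorial heart is to show that such a step always exists while $P\smallsetminus Q_\alpha\neq\emptyset$: finite principal ideals guarantee that $P\smallsetminus Q_\alpha$ has minimal elements, and if no admissible step existed one could repeatedly follow a reversed matched arrow upward and an unmatched arrow downward to produce an infinite directed simple path in $\mH_{N}(P)$, i.e.\ a ray, contradicting raylessness (acyclicity keeps the path simple). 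Lemma \ref{tree_lemma} and the finiteness of the descending regions $D_{N}(x)$ control this local analysis.

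Third, I would verify the two local homotopy moves while carrying along a CW-complex $P_N^{(\alpha)}$ and a homotopy equivalence $\mK(Q_\alpha)\simeq P_N^{(\alpha)}$. In case (i) Lemma \ref{multiple_glue} attaches a genuine $\deg(c)$-cell, and I attach the corresponding cell to $P_N^{(\alpha)}$. In case (ii) both $e_x$ and $e_y$ are attached, but they cancel: since $x$ is maximal in $\hat{y}\!\downarrow$ and $P$ is admissible, $\mK(\hat{y}\!\downarrow\smallsetminus\{x\})$ is contractible, so $e_x$ is a homotopy free face of $e_y$ and $\mK(Q_\alpha)\cup e_x\cup e_y\simeq\mK(Q_\alpha)$; the cleanest justification reuses the adjunction-space gluing theorem exactly as in the proof of Lemma \ref{multiple_glue}, and $P_N^{(\alpha)}$ is left unchanged. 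Finally --- and this is the \emph{main obstacle}, absent in the finite case --- I must push these equivalences through the limit ordinals. At a limit $\lambda$ one has $\mK(Q_\lambda)=\bigcup_{\alpha<\lambda}\mK(Q_\alpha)$ and $P_N^{(\lambda)}=\bigcup_{\alpha<\lambda}P_N^{(\alpha)}$, and every map in sight is an inclusion of a CW-subcomplex, hence a cofibration; choosing the homotopy equivalences compatibly along the chain, I would invoke the fact that a colimit of homotopy equivalences along such a transfinite tower of cofibrations is again a homotopy equivalence (a mapping-telescope, or Whitehead, argument). This carries the equivalence to $Q_\lambda$ and, at the top stage, yields $\mK(P)\simeq P_N=:P_M$, a complex with exactly one $n$-cell for each critical element of $N$ of degree $n$, i.e.\ for each element of $\mC^n_M(P)\cup\mR^n_M(P)$. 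The delicate points to get right are the compatibility of the chosen homotopy equivalences, so that the telescope argument applies, and the verification that the transfinite recursion exhausts all of $P$.
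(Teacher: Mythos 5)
Your strategy is genuinely different from the paper's. After the common reduction to a rayless matching via Theorem \ref{reversing_thm}, the paper filters $P$ by dimension-wise lower sets $P^0\subseteq P^1\subseteq\cdots$ and, inside each dimension, stratifies the matched pairs by the invariant $L_M(x)$, cancelling all pairs of a given stratum simultaneously by \cite[Lemma 2.9]{minian}, attaching all critical $(i+1)$-cells at once with Lemma \ref{multiple_glue}, and checking that the resulting infinite unions of retractions and equivalences are equivalences by compactness plus Whitehead. You instead propose a transfinite filtration by matching-saturated lower sets $Q_\alpha$, adding one critical cell or one matched pair at a time, with the same two local moves (Lemma \ref{multiple_glue} for a critical cell; for a cancelling pair, note that you must apply Lemma \ref{multiple_glue} twice, since $x$ is not maximal in $Q_\alpha\cup\{x,y\}$, or more cleanly apply \cite[Lemma 2.9]{minian} twice, exactly as the paper does). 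Your limit-ordinal step is sound: pair-cancellations are retractions, hence restrict to the identity on $\mK(Q_\alpha)$, cell attachments can be made rel the subcomplex, and a colimit of compatibly chosen homotopy equivalences along towers of CW inclusions is a homotopy equivalence, by the same compactness-plus-Whitehead argument the paper itself uses. This buys a conceptually simple induction at the price of transfinite bookkeeping, whereas the paper's $L_M$-stratification keeps everything indexed by $\mN$.

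However, there is a genuine gap at what you correctly identify as the combinatorial heart: the existence of an addable step. Your argument --- ``repeatedly follow a reversed matched arrow upward and an unmatched arrow downward'' --- does not work as stated. The hypothesis that no step exists constrains only the \emph{minimal} elements of $C=P\smallsetminus Q_\alpha$: each such element is matched upward and its match has another facet in $C$. But after one up-down move your walk lands on a facet $z\in C$ of that match, and $z$ need not be minimal in $C$; it may be critical, or matched \emph{downward} to another element of $C$, and in either case the walk has no admissible continuation. Neither Lemma \ref{tree_lemma} nor the finiteness of $D_N(x)$ rescues this: those control the descending regions of the matching, not this walk. Note also that the argument silently needs the invariant (maintained by your construction, but never stated) that $Q_\alpha$ is closed under the matching; without it, even the first step fails, since a minimal element of $C$ could be matched downward into $Q_\alpha$ and would be neither critical nor matched upward.

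Fortunately the repair is short. Start the walk at an element $x_0\in C$ whose degree $d_0$ is \emph{minimal among all degrees occurring in $C$}. By gradedness $x_0$ is then automatically minimal in $C$, so by the no-step hypothesis (and saturation) it is matched upward to some $y_0$, whose facets all have degree $d_0$; the facet $z_0\neq x_0$ of $y_0$ lying in $C$, provided by the no-step hypothesis, is therefore again of minimal degree, hence again minimal in $C$, and the walk $x_0\to y_0\to z_0\to y_1\to z_1\to\cdots$ continues indefinitely within degrees $d_0$ and $d_0+1$. Acyclicity of the matching forces all its vertices to be distinct (a repetition would yield a directed cycle), so it is a ray, contradicting raylessness. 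With this fix, and with the compatibility bookkeeping you already flag, your proof goes through and is a valid alternative to the one in the paper.
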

\begin{proof}
Theorem \ref{reversing_thm} allows us to assume the matching $M$ is rayless.

We define $P^0$ to be the set of critical $0$-cells of $P$. Given $P^{n-1}$ we define $P^n_*$ as the union of $P^{n-1}$, the set of noncritical $(n-1)$-cells of $P$ not contained in $P^{n-1}$ and their $n$-dimensional matches. Now, $P^n$ is defined to be the union of $P^n_*$ and the set of critical $n$-cells of $P$. 
The Morse matching on $P$ restricts to Morse matchings on $P^n, P^{n+1}_*$ for all $n\in\mN$. 

Let $i\in\mN$. We will show that $\mK(P^{i+1}_*)$ strong deformation retracts to $\mK(P^i)$. In order to do this, consider the sets $L_n=\{x\in P^{i+1}_*:\deg(x)=i \textrm{ and } L_M(x)= n\}$ for $n\in\mN$. Note that for an $i$-dimensional cell $x\in P^{i+1}_*$ we have $x\in L_0$ if and only if $x\in P^i$. For $x\in L_n$, $n>0$, let $m_x$ be the unique $(i+1)$-cell matched with $x$. For $n>0$ we define $\overline{L_n}=L_n\cup \{m_x:x\in L_n\}$. Let $P^i_k=P^i\cup \bigcup_{1\leq n\leq k}\overline{L_n}$. Because $M$ is rayless for each $i$-cell $x\in P^i_*$ the number $L_M(x)$ is defined and thus $\bigcup_{k\in\mathbb{N}}P^i_k=P^{i+1}_*$. 

Let $x\in L_n, n>0$. We will show that \begin{equation}\label{eq1}x\!\uparrow_{P^i_n}=\{m_x\}.\end{equation} Suppose $x$ is contained in some $y\in P^i_n$ with $y\not=m_x$. Therefore, $y\in \overline{L_k}$ for some $k\leq n$, which means $y=m_z$ for some $z\in L_k$. But this means $x\in M_+(z)$, so $L_M(z)\geq L_M(x)+1 = n+1$, so $z\not\in L_k$.

We will now prove that for any $(i+1)$-cell $y\in P^i_n$ we have \begin{equation}\label{eq2}y\!\downarrow_P\subseteq P^i_n.\end{equation} In fact, we only need to show that all elements of $P$ covered by $y$ belong to $P^i_n$, since cells of dimension lower than $i$ are all contained in $P^i\subseteq P^i_n$. As $y\in P^i_n$, there is some $i$-cell $x$ with $m_x=y$ and $L_M(x)\leq n$. If for any $z$ covered by $y$ we had $L_M(z)=n'>n$, then we would have $z\!\uparrow_{L_{n'}}\supseteq \{m_x,m_z\}$, which, because of (\ref{eq1}), is impossible, since $m_z\not=m_x$.

By (\ref{eq1}) for every $x\in L_n$ the set $\hat{x}\!\updownarrow_{P^i_n}$ has a greatest element $m_x$, and thus $\mK(\hat{x}\!\updownarrow_{P^i_n})$ is contractible as a cone and $\mK(P^i_n)$ strong deformation retracts to $\mK(P^i_n\smallsetminus\{x\})$ by \cite[Lemma 2.9]{minian}.

Now, $\mK(\widehat{m_x}\!\updownarrow_{P^i_n\smallsetminus\{x\}})$ is contractible, which follows from (\ref{eq2}) and $P$ being admissible. Again by \cite[Lemma 2.9]{minian}, $\mK(P^i_n\smallsetminus\{x\})$ strong deformation retracts to $\mK(P^i_n\smallsetminus\{x,m_x\})$. Composition of these two retractions gives a strong deformation retraction $r_x\colon \mK(P^i_n)\to \mK(P^i_n\smallsetminus\{x,m_x\})$. Note that $r_x$ may be chosen so that $r_x(\mK(m_x\!\downarrow))\subseteq \mK(\hat{m_x}\!\downarrow\smallsetminus\{x\})$. This allows to combine the deformation retractions $r_x$ for all $x\in L_n$ into a strong deformation retraction $r^i_n\colon \mK(P^i_n)\to \mK(P^i_{n-1})$. Let us define the composition $R^i_n=r^i_1\circ r^i_2\circ\dots\circ r^i_n\colon \mK(P^i_{n})\to \mK(P^i)$. Clearly, $R^i_n$ are strong deformation retractions and we have a containment of graphs of functions $R^i_n\subseteq R^i_{n+1}$. The map \mbox{$R^i=\bigcup_{n\in\mN}R^i_n\colon \mK(P^{i+1}_*)\to \mK(P^{i})$} is continuous, since any finite subcomplex of $\mK(P^{i+1}_*)$ is contained in some $\mK(P^i_n)$, so $R^i$ restricted to any finite subcomplex is just some $R^i_n$, which is continuous.

We will show that $R^i\colon \mK(P^{i+1}_*)\to \mK(P^i)$ is a weak homotopy equivalence. Surjectivity of the induced map $\pi_k(R^i)$ follows from $R^i$ being a retraction. To prove injectivity of $\pi_k(R^i)$ consider a $[p]\in\pi_k(\mK(P^{i+1}_*))$. Since the spheres are compact spaces, $p$ maps $S^k$ onto a compact subcomplex of $\mK(P^{i+1}_*)$, which is contained in some $\mK(P_{i}^n)$, and since $R^i_n$ is a strong deformation retraction, the map $p$ may be homotoped to the map $R^i_n\circ p=R\circ p$. Therefore, if $R\circ p$ is nullhomotopic, $p$ is nullhomotopic, which proves the injectivity of $\pi_k(R^i)$. By the Whitehead Theorem \cite[Theorem 4.5]{hatcher} this means $R^i$ is a strong deformation retraction.

We will now define inductively, for $i\in\mN$, CW-complexes $M^i$ and homotopy equivalences $h^i\colon \mK(P^i)\to M^i$ such that $M^i\subseteq M^{i+1}$ and $h^{i+1}$ restricted to $\mK(P^{i})$ equals $h^{i}$. Let $M^0=\mK(P^0)$, $h^0=\id_{P^0}$. Given $M^i$ and $h^i$ we shall define $M^{i+1}, h^{i+1}$. Let \[A=P^{i+1}\smallsetminus P^{i+1}_*=\{x\in P: \deg(x)=i, x \textrm{ critical}\}.\] By Lemma \ref{multiple_glue} the simplicial complex $\mK(P^{i+1})$ is homotopy equivalent to a CW-complex $\mK(P^{i+1}_*)\cup \bigcup_{x\in A}e_x$.  Denote the attaching map of the $(i+1)$-cell $e_x$ by $f_x$. Since $R^i\colon \mK(P^{i+1}_*)\to \mK(P^{i})$ is a strong deformation retraction, it induces a homotopy equivalence rel $\mK(P^i)$, denoted by $\widetilde{R^{i+1}}$, between $\mK(P^{i+1}_*)\cup \bigcup_{x\in A}e_x$ and the CW-complex $\mK(P^i)\cup \bigcup_{x\in A}\widetilde{e_x}$ obtained by attaching to $\mK(P^{i})$ the set of critical $(i+1)$-cells $\{\widetilde{e_x}\}_{x\in A}$ via attaching maps $R^i\circ f_x$. Now, the homotopy equivalence $h^i\colon \mK(P^i)\simeq M^i$ induces a homotopy equivalence $\widetilde{h^{i+1}}$ between $\mK(P^i)\cup \bigcup_{x\in A}\widetilde{e_x}$ and the CW-complex $M^{i+1}$, which is obtained by attaching to $M^{i}$ the set of critical $(i+1)$-cells $\{\widehat{e_x}\}_{x\in A}$ via attaching maps $h^i\circ R^i\circ f_x$. Let $h^{i+1}=\widetilde{h^{i+1}}\circ \widetilde{R^{i+1}}$.

From the construction it is clear that $P_M=\bigcup_{i\in\mN}M^i$ has exactly one $n$-cell for each critical cell of the matching $M$ on $P$ of degree $n$. The map $h_M=\bigcup_{i\in\mN}h^i\colon\mK(P)\to P_M$ is continuous, since it is continuous on every compact subcomplex.

We will show $h_M$ is a weak homotopy equivalence. First, let $[p]\in\pi_k(\mK(P))$. Then, by compactness, the image of $p$ is contained in some $\mK(P^i)$. Let $j=\max(i,k+1)$. If $h_M\circ p=h^j \circ p$ is nullhomotopic in $P_M$, then by the Cellular Approximation Theorem \cite[Theorem 4.8]{hatcher} it is nullhomotopic in $M^{k+1}$, and since $h^j$ is a homotopy equivalence, $p$ is nullhomotopic in $\mK(P^j)\subseteq \mK(P)$. Therefore, $\pi_k(h_M)$ is injective. 

Now, let $[q]\in\pi_k(P_M)$. Using again the Cellular Approximation Theorem we may assume that the image of $q$ is contained in $M^{k}$. Since $h^k$ is a homotopy equivalence, there is some $p\colon S^k\to \mK(P^k)\subseteq \mK(P)$ with $q\simeq h^k\circ p = h\circ p$. This proves the surjectivity of $\pi_k(h_M)$. By the Whitehead Theorem \cite[Theorem 4.5]{hatcher}, $h_M$ is a homotopy equivalence.
\end{proof}

Note that a version of Theorem \ref{mainthm} for arbitrary Morse matchings on $1$-dimensional regular CW-complexes is also true and can be easily proved using Theorem \ref{reversing_thm2}.
\end{section}

\begin{section}{Final remarks and open problems}\label{remarks}
The notion of discrete Morse function was absent throughout Sections \ref{preliminaries} - \ref{homotopical} because the author finds Morse matchings more flexible than discrete Morse functions. However, given a rayless Morse matching $M$ on a regular CW-complex $X$ (or more generally, on a good poset) we may construct a discrete Morse function $f$ on $X$ that is self-indexing (i.e. its value on each critical cell equals the dimension of that cell) whose induced Morse matching is $M$. Let $X^i$, $X^i_*$ and the corresponding $L_n$ for $i,n\in\mN$ be defined as in the proof of Theorem \ref{mainthm}. Fix $i\in\mN$. For $x\in L_n$ put $f(x)=n+(1-\frac{1}{2^{2n}})$. If $n>0$, put $f(m_x)=n+(1-\frac{1}{2^{2n-1}})$. Performing this construction for all $i\in\mN$ and all $n\in\mN$ we get, as the reader will easily check, a self-indexing discrete Morse function on $X$ whose corresponding matching is $M$.

The function obtained in the previous paragraph is, however, in general not proper, since for example we do not assume that $M$ has a finite number of critical cells. For infinite CW-complexes it is often the case that a discrete Morse function cannot be both proper and self-indexing. In fact, for complexes of large cardinality a discrete Morse function cannot even be taken injective, as it is required in some of Forman's proofs. This leads one to the idea of considering discrete Morse functions that are not real-valued, but their codomain lies in a bigger ordered set, e.g. in a lexicographical product of some sufficiently large ordinal and the real line. This could make it possible to mimick the proofs of \cite{forman}.

Another direction of research is understanding and handling the multirays and the situation of infinitely many rays in complexes of dimension greater than $1$. At least in the locally finite case using the methods of locally finite homology, homology at infinity and proper homotopy theory seems to be a promising approach, that in the smooth case has led to some new results \cite{gemmeren}. 
\end{section}

\vspace{2mm}
\noindent\textbf{Acknowledgement:} During the prepartation of this paper the author has been supported by the joint PhD programme `\'Srodowiskowe Studia Doktoranckie z Nauk Matematycznych'.

%
% ====== Bibliography ==============================================================
%

\end{document}